\documentclass{article}
%
\usepackage{amsthm,amsfonts,amsmath,amscd,amssymb,verbatim}
\usepackage{graphicx}
\usepackage{psfrag}

\usepackage{hyperref}
\usepackage{pinlabel} 
\parindent=0pt
\parskip=4pt
\title{On Langmuir's periodic orbit}
\author{K.~Cieliebak, U.~Frauenfelder and M.~Schwingenheuer}
\date{}
%
%
\theoremstyle{plain}
\newtheorem{theorem}{Theorem}

\newtheorem{proposition}[theorem]{Proposition}

\theoremstyle{remark}

\newtheorem{remark}[theorem]{Remark}
\newtheorem{example}[theorem]{Example}

\newtheorem{definition}[theorem]{Definition}
\newtheorem*{definition*}{Definition}

%
%

%


%
\newcommand{\ol}{\overline}

\newcommand{\p}{\partial}

\newcommand{\wt}{\widetilde}

\newcommand{\R}{{\mathbb{R}}}

%

%


\renewcommand{\max}{{\rm max}}


%

%

%
%
%

\newcommand{\dd}[2]{\frac{\partial {#1}}{\partial {#2}}}




%
%

\begin{document}

\maketitle

\begin{abstract}
\noindent
Niels Bohr successfully predicted in 1913 the energy levels for the
hydrogen atom by applying certain quantization rules to classically
obtained periodic orbits. Many physicists tried to apply similar
methods to other atoms. In his well-known 1921 paper, I.~Langmuir
established numerically the existence of a periodic orbit in the
helium atom considered as a classical three body problem. In this
paper we give an analytic proof of the existence of Langmuir's periodic
orbit.   
\end{abstract}

\section{Introduction}\label{intro}

After Niels Bohr had successfully described of the spectrum of the
hydrogen atom in 1913~\cite{bohr}, leading physicists tried to apply
the same methods to the more complicated atoms like the helium
atom. Even in the seemingly easiest cases, such as predicting the
ionization potential of helium, they only obtained flawed
results. According to Langmuir~\cite{lan}, the prediction due to
Bohr's model is $28.8$ volt whereas experimental data suggests
$25.4 \pm 0.25$ volt. 
In the years to come physicists tried to think of other periodic
orbits of the two electrons in the helium atom which, after applying
the quantization rules, would make predictions that in turn could be
tested against the experimental data. Notably a model of young
Heisenberg predicted the ionization potential to be $25.6$ volt, but
Bohr rejected the idea due to the fact that Heisenberg would have
needed half integer quantum numbers. So Heisenberg abandoned the idea,
cf.~\cite{scho}. In 1921, I.~Langmuir considered a restricted form of
the classical mechanical system and found an approximate periodic
solution to the equations of motions by using a ``calculating
machine''. In his solution the electrons move simultaneously back and
forth along nearly circular arcs, situated symmetrically with respect
to an axis through the nucleus.
Applying the quantization rules to this orbit he predicted the
ionization potential to $25.62$ volt (Langmuir~\cite{lan}), in good
agreement with experimental data. 

With the advent of Schr\"odinger's and Heisenberg's quantum theory the
above-described \emph{semiclassical methods} grew out of 
fashion. There is still no exact theory of the helium atom, but the
approximations used in the modern setting are sufficiently accurate
for applications, cf.~\cite{nolting,schu}. 
With the work of M.Gutzwiller~\cite{gutz} in the 1970ies,
semiclassical methods have regained popularity, since his famous
\emph{trace formula} relates energy levels of quantum systems with
classical data (periodic orbits, their Maslov indices and
periods). Herein also lies our motivation to reconsider Langmuir's
periodic orbit in the present work. Our main result is an analytic
proof of the existence of Langmuir's periodic orbit, which we will
formulate more precisely in the next section.

\section{Setup}\label{setup}

We are going to consider the helium atom from a classical point of
view and describe the assumptions that lead to the Langmuir
problem. The nucleus of a helium atom consists of two neutrons and two
protons carrying a charge of $+2e$. Each of the two electrons in the
helium atom carries a charge of $-e$. Here $e=1.6\cdot 10^{-19} As$ is
the elementary charge, which will be set to one by rescaling in the
following. The same will be done with the electron mass. A
neutron/proton is roughly 2000 times heavier than an electron so that
the nucleus is about 8000 times heavier than each electron. We will
therefore assume the nucleus as fixed and sitting in the origin of the
coordinate system. We also restrict to the planar case, meaning that
the two electrons move in a common plane under the attractive force of
the nucleus and their mutually repelling force. We thus
consider a variant of the three body problem of celestial
mechanics where the force between the two lighter masses is repelling
rather than attracting. Note that the influence of none of the three
bodies on the other two is negligeable, so this is not a variant of
the well-studied ``restricted'' three body problem.

Each electron is described by its two position coordinates
$q_i=(q_i^x,q_i^y)$ and its two momentum coordinates $p_i=(p_i^x,p_i^y)$,
$i=1,2$. Therefore the full phase space is eight dimensional and the
Hamiltonian of the full problem governing the dynamics is given by 
$$
   H(q_1,q_2,p_1,p_2)=\frac{1}{2}\left(|p_1|^2+|p_2|^2\right)-\frac{2}{|q_1|}-\frac{2}{|q_2|}+\frac{1}{|q_1-q_2|}.
$$
Here the first term describes the kinetic energy of the electrons,
the second and third terms describe the Coulomb attraction by the
nucleus, and the last term describes the Coulomb repulsion between the
two electrons. 

\subsection{The symmetry and the Langmuir Hamiltonian}

The phase space of a mechanical system carries a canonical
symplectic form $\omega=\Sigma dp_i \wedge dq_i$ which is preserved
under the time evolution of the system. This observation is already
implicit in the work of J.-L.~Lagrange and S.~Poisson
(see~\cite{Souriau},~\cite{Marle}), and explicit in the work of H.~Poincar\'e and
\'E.~Cartan.\footnote{We thank C.~Viterbo for setting the
  history straight.}  
In today's terminology, a symplectic form $\omega$ on a manifold $M$
is a closed, nondegenerate 2-form. A diffeomorphism $\phi$ of $M$ is called a symplectomorphism (also
called a canonical transformation in physics) if
it preserves the symplectic form: $\phi^\ast \omega=\omega$. To any
smooth function $H\colon M \to \R$ we can associate a one-parameter
family of symplectomorphisms $\phi_t^H$ by integrating the Hamiltonian
vector field $X_H$, which is uniquely defined by the nondegeneracy of
$\omega$ via
$$
   \iota_{X_H} \omega=-dH.
$$ 
If $H$ does not depend on time we can think of $H$ as the total
energy of the mechanical system. A point $z$ in phase space
constitutes the initial conditions of the mechanical system, and
$\phi_t^H(z)$ describes how the mechanical system evolves over
time.

A short calculation shows that for any symplectomorphism $\phi$ and
Hamiltonian vector field $X_H$, the vector field $\phi_\ast X_H$ is
associated to the Hamiltonian function $H\circ \phi^{-1}$. 
Assume now that $\phi$ is a symplectomorphism which preserves $H$:
$H\circ \phi^{-1}=H$. Then we readily see that the vector fields
associated to $H$ and $H\circ \phi^{-1}$ coincide: $\phi_\ast
X_H=X_H$. This means that $\phi$ preserves the Hamiltonian flow,
i.e. the time evolution of the mechanical system.  
Assume further that $\phi$ is a symplectic involution, meaning a
symplectomorphism with $\phi^2=id$, which preserves the Hamiltonian
function $H$. Of particular interest is the set $F$ of fixed points of
$\phi$. Since $\phi$ is an involution, one can show that $F$ is a
smooth manifold such that $X_H$ is tangent to $F$. Accordingly, $F$ is
preserved under the Hamiltonian flow. Thus finding a symplectic
involution which preserves $H$ leads to a dynamical system of lower
complexity, regarding $F$ instead of the full phase space. 

The symplectic involution which yields the Langmuir problem is given
by
$$
   \tau \colon \R^8 \to \R^8,\qquad (q_1,q_2,p_1,p_2) \mapsto
   (\overline{q}_2,\overline{q}_1,\overline{p}_2,\overline{p}_1).
$$
Here $\overline{q}$ means complex conjugation where the vector
$q=q^x+iq^y$ is written in complex notation. A short calculation shows
that $\tau$ is symplectic for the standard symplectic form and leaves
$H$ invariant. Its fixed point set is $F=\{q_1=\overline{q_2},\;
p_1=\overline{p}_2\}$. In terms of the variables $q=q_1$ and $p=p_1$
on $F$ this leads to the \emph{Langmuir Hamiltonian} on $\R^4$, 
$$
   H(q,p)=|p|^2-\frac{2}{|q|}-\frac{2}{|q|}+\frac{1}{|q-\overline{q}|}=|p|^2-\frac{4}{|q|}+\frac{1}{2|\textrm{Im}(q)|}.
$$
What this amounts to from the physical point of view is that $\tau$
interchanges the electrons but at the same time reflects them in the real
axis. Thus the fixed point set F consists of pairs of electrons which
perform a mirrored movement. The system has angular momentum zero at
all times because the angular momenta of the two electrons cancel.
In Cartesian coordinates $q=(x,y)$ and $p=(p^x,p^y)$ the Hamiltonian reads
$$
   H(x,y,p)=|p|^2-\frac{4}{\sqrt{x^2+y^2}}+\frac{1}{2|y|}.
$$
Note that $y=0$ corresponds to collisions of the two electrons. Since
we are interested in orbits without collisions, we will restrict our
attention to the region where $y>0$ on which the Hamiltonian simplifies to
$$
   H(x,y,p)=|p|^2-\frac{4}{\sqrt{x^2+y^2}}+\frac{1}{2y}.
$$
We will call $V(x,y)=-\frac{4}{\sqrt{x^2+y^2}}+\frac{1}{2y}$ the \emph{Langmuir potential}.
The equations of motion $\dot{q}=2p, \ \dot{p}=-\nabla V$ are thus given by
\begin{equation}\label{eq:Ham}
\left\{
\begin{aligned}
  \ddot{x} &= \frac{-8x}{(x^2+y^2)^\frac{3}{2}}, \\
  \ddot{y} &= \frac{-8y}{(x^2+y^2)^\frac{3}{2}}+\frac{1}{y^2}.
\end{aligned}
\right.
\end{equation}

\subsection{Hill's regions for the Langmuir potential}\label{equipot}

We are interested in solutions of negative energy $E<0$. For such
solutions the coordinates $(x,y)$ are confined to the {\em Hill's region}
$$
   \mathcal{H}_E := \{(x,y)\mid V(x,y)\leq E\}.
$$
Its boundary is given by the equipotential line $\{V=E\}$ and
corresponds to points with zero velocity. 
Figure~\ref{fig:2} shows a 3D plot of the Langmuir potential with
the equipotenial line for energy $E=-1$. 
\begin{figure}[h!]
 \includegraphics[width=14cm]{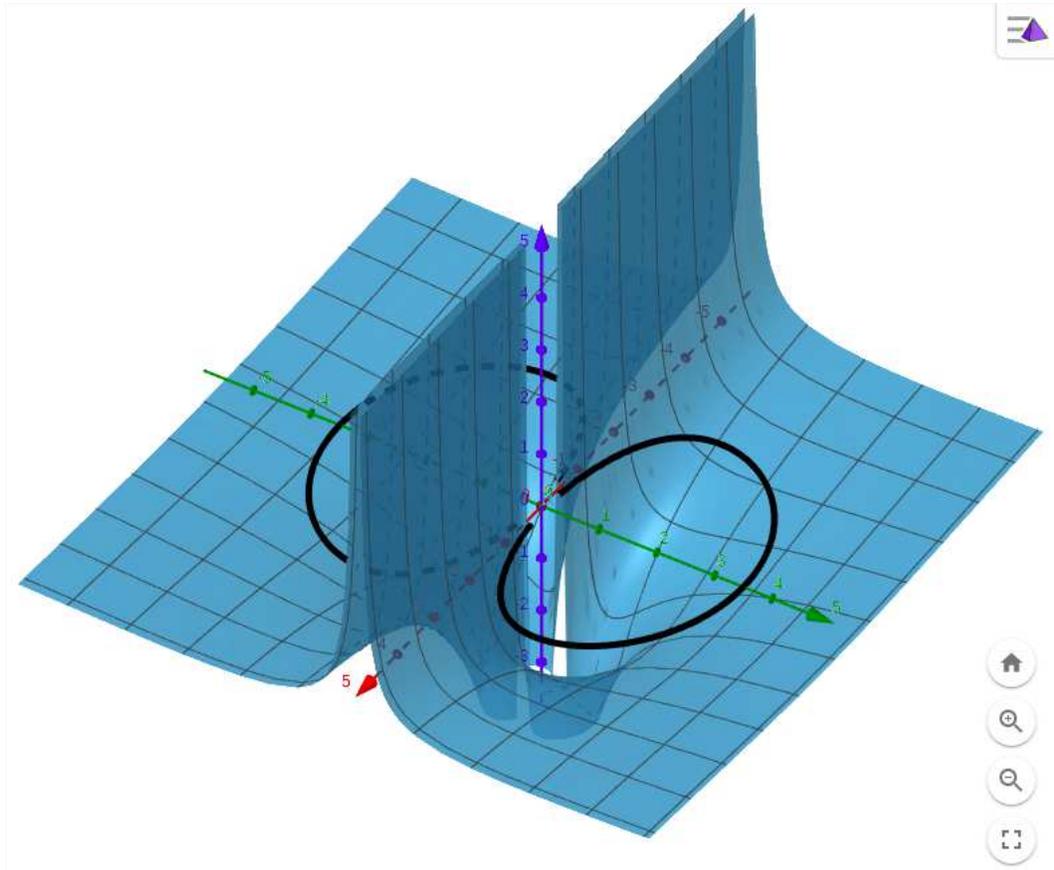}
 \caption{Langmuir potential and equipotential line}
 \label{fig:2}
\end{figure}
Note that the Hill's region for $E=-1$ contains the interval
$\left[0,\frac{7}{2}\right]$ on the y-axis, and solutions starting on
the $y$-axis with zero velocity fall into the origin in finite time.
The Hill's regions are bounded for $E<0$, but they become unbounded at
the ionization energy $E=0$. A short calculation shows that Hill's
region at $E=0$ is bounded by the two lines $V^{-1}(0)=\{(x,y)\mid
y=\frac{1}{\sqrt{63}}|x| \}$.

\subsection{Scaling invariance of the Langmuir Hamiltonian}\label{scale}

A salient feature of the Langmuir potential is its homogeneity of
degree $-1$: $V(aq)=a^{-1}V(q)$ for $a>0$. The diffeomorphisms 
$$
   \beta_a(q,p) := (aq,\frac{1}{\sqrt{a}}p)
$$
thus satisfy $H\circ\beta=a^{-1}H$. Moreover, they are conformally
symplectic: $\beta^\ast \omega =\sqrt{a}  \omega$. Now a short
computation yields: \emph{If $z(t)$ is a solution of~\eqref{eq:Ham} of
  energy $E$, then $\wt z(t)=\beta_a\bigl(z(a^{-3/2}t)\bigr)$ is again
  a solution of~\eqref{eq:Ham} of energy $a^{-1}E$.} 

So the dynamics on energy hypersurfaces $H^{-1}(E)$ for different
$E<0$ differ only by their time parametrization. In the following
discussion we will therefore often restrict our attention to the case
$E=-1$. 

\subsection{The magical line}
An important player in the sequel will be the set in the x-y-plane
where the attractive force on the electron from the nucleus and the
repelling force of the other electron in vertical direction cancel.
Setting $\ddot y=0$ in~\eqref{eq:Ham}, we find that this set is the
pair of lines
$$
   \sqrt{3}y=|x|.
$$
We will refer to it as the \emph{magical line}.
Above the magical line we thus have $\ddot y<0$, while below it we have
$\ddot y>0$.

\subsection{Langmuir orbits and the main result}

Let us fix an energy $E\leq 0$ and a height $h>0$ satisfying
$\frac{7}{2h}+E\geq 0$, so that the point $(0,h)$ lies in the Hill's
region at energy $E$. We are interested in solutions of~\eqref{eq:Ham}
of energy $E$ that start at the point $(0,h)$ on the $y$-axis in
horizontal direction to the right, i.e., they satisfy the initial conditions
$$
   x(0)=0,\quad y(0)=h,\quad
   \dot{x}(0)=2|p|=2\sqrt{\frac{7}{2h}+E},\quad \dot{y}(0)=0. 
$$ 
We will refer to this initial value problem as the \emph{Langmuir
  problem} at energy $E$ and height $h$. 
Figure~\ref{fig:Langmuir-problem} shows solutions of the Langmuir
problem for energy $E=-1$ and two different heights. It also shows the
boundary of the Hills region and the magical line. 
\begin{figure} 
\begin{center}
\includegraphics[width=12cm]{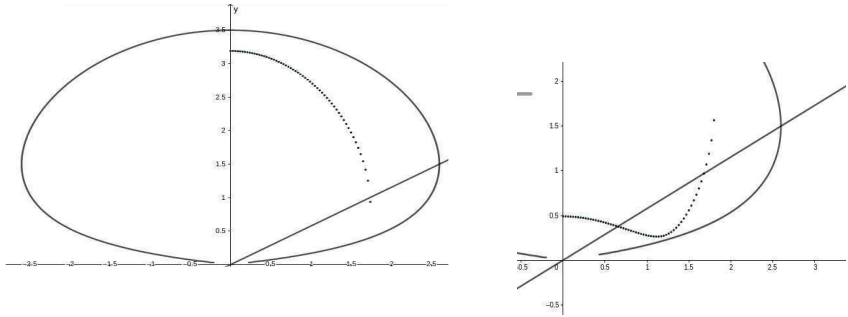}

\end{center}
 \caption{Solutions of the Langmuir problem for different initial heights}
 \label{fig:Langmuir-problem}
\end{figure}
We now define the protagonist of this paper:
\begin{definition*}
A \emph{Langmuir orbit} of energy $E$ is a solution to
the Langmuir problem (with some $h$) whose velocity vector vanishes at
some time $T>0$.
\end{definition*}

Thus a Langmuir orbit touches the boundary of the Hill's region at
time $T$. After that it reverses its direction and travels back along
the same trajectory, hitting again the point $(0,h)$ at time $2T$,
performs the same motion in the negative $x$-direction, and then
repeats itself with period $4T$. This is the periodic orbit described
by Langmuir in~\cite{lan}. Therefore, the main result of this paper
can be phrased as

\begin{theorem}\label{langmuir}
For each negative energy $E<0$ there exists a Langmuir orbit.
\end{theorem}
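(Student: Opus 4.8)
The plan is to use a shooting argument in the height parameter $h$. By the scaling invariance established in Section~\ref{scale}, it suffices to treat the energy $E=-1$; any Langmuir orbit at energy $-1$ rescales to one at arbitrary $E<0$. For each $h\in\bigl(0,\tfrac{7}{2}\bigr]$ consider the unique solution $(x_h(t),y_h(t))$ of \eqref{eq:Ham} with the Langmuir initial conditions at height $h$ and energy $-1$. I want to show that for some $h$ the velocity vector $(\dot x_h,\dot y_h)$ vanishes at a positive time. The idea is to follow the solution until the first time $T(h)$ at which $\dot x_h$ returns to zero (equivalently, the first time the trajectory is again vertical), and to examine the sign of $\dot y_h(T(h))$ there. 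A Langmuir orbit occurs exactly when $\dot y_h(T(h))=0$ as well.

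The key steps are as follows. First I would establish that $T(h)$ is well-defined and finite for every admissible $h$: the solution starts moving into the region $x>0$, and one must rule out that it escapes to infinity or runs into the collision locus $\{y=0\}$ or the origin before $\dot x$ vanishes again. Conservation of energy confines $(x_h,y_h)$ to the bounded Hill's region $\mathcal H_{-1}$, and since $\ddot x_h=-8x_h/(x_h^2+y_h^2)^{3/2}$ has the sign of $-x_h$, the $x$-coordinate behaves like a one-dimensional oscillator being pulled back toward the $y$-axis, which forces $\dot x_h$ to vanish again in finite time and gives continuous (indeed smooth) dependence of $T(h)$ on $h$ away from collisions. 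Second, I would analyze the two limiting regimes. For $h$ small the point $(0,h)$ is deep in the Coulomb well, the initial horizontal speed $2\sqrt{\tfrac{7}{2h}-1}$ is large, and the repulsion term $1/(2y)$ is strong near the $y$-axis; here the trajectory should be pushed \emph{above} the magical line $\sqrt3\,y=|x|$ before becoming vertical again, so that $\ddot y<0$ has dominated and $\dot y_h(T(h))<0$. For $h$ close to $\tfrac{7}{2}$ the initial speed is small, the particle barely leaves the $y$-axis, stays \emph{below} the magical line where $\ddot y>0$, and hence $\dot y_h(T(h))>0$. Third, by the intermediate value theorem applied to the continuous function $h\mapsto \dot y_h(T(h))$, there is an intermediate $h$ with $\dot y_h(T(h))=0$, giving the desired Langmuir orbit.

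The main obstacle I expect is making the two sign computations at the endpoints of the $h$-interval rigorous, in particular the small-$h$ regime: one must show that when the particle first returns to a vertical configuration it genuinely lies above the magical line, which requires quantitative control of how far the trajectory rises relative to how far it travels in $x$ during the time $\dot x_h>0$. This is where one should exploit the explicit structure of \eqref{eq:Ham}, possibly via a comparison with the pure Kepler problem (dropping the repulsion) together with a monotonicity estimate coming from the sign of $\ddot y$ across the magical line, or via a rescaling that sends $h\to0$ to a limiting problem dominated by the Coulomb and repulsion terms. A secondary technical point is continuity of $T(h)$ at any $h$ for which the return configuration is tangent to the boundary of the Hill's region (so that $\dot y_h(T(h))=0$ is exactly the transversality failure we are after) — but there the implicit function theorem applies since $\ddot x_h\neq 0$ whenever $x_h\neq0$, and one checks $x_h(T(h))\neq0$ at the relevant return time; degenerate cases where the orbit returns exactly to the $y$-axis would themselves already be of Langmuir type after reflection, so they cause no harm.
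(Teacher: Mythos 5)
Your overall framework (shooting in the height $h$ at energy $E=-1$, following the orbit to the first time $T(h)$ with $\dot x_h=0$, and studying the sign of $\dot y_h(T(h))$) is exactly the paper's setup, and your treatment of well-definedness and smooth dependence of $T(h)$ matches the paper's STEP 1. But there are two genuine problems. First, your sign analysis at the two ends of the $h$-interval is reversed. On the $y$-axis one computes from \eqref{eq:Ham} that $\ddot y(0)=-8/h^2+1/h^2=-7/h^2<0$: the nuclear attraction dominates the mutual repulsion there, the $y$-axis lies \emph{above} the magical line $\sqrt3\,y=|x|$, and for $h$ close to $\tfrac72$ the orbit has tiny horizontal speed, hugs the $y$-axis above the magical line, and therefore reaches $T(h)$ with $\dot y_h(T(h))<0$ (in the limit $h=\tfrac72$ it falls straight into the origin). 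Conversely, for small $h$ the large horizontal speed carries the orbit quickly \emph{below} the magical line, where $\ddot y>0$, and the expected sign is $\dot y_h(T(h))>0$. So the easy endpoint is $h$ near $\tfrac72$ (with sign negative), not positive as you claim, and the hard endpoint is $h$ near $0$.

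Second, and more importantly, the hard endpoint is precisely what you leave open: you flag the small-$h$ sign as ``the main obstacle'' and only sketch possible comparisons (Kepler, monotonicity across the magical line), none of which is carried out. The paper in fact never proves $\dot y_h(T(h))>0$ for small $h$ directly. Instead it argues by contradiction: assuming no Langmuir orbit exists, a continuation argument (STEP 3) shows that then $\dot y_h(t)<0$ on all of $(0,t_h]$ for \emph{every} $h$, the key point being that a first degenerate zero of $\dot y_h$ would have to occur with $\ddot y_h=0$, i.e.\ on the magical line, which is impossible for an orbit that has stayed above that line. Then (STEP 4) one rescales as in Section~\ref{scale} to fixed height $1$ and energies $-h\nearrow 0$ and invokes Proposition~\ref{PropLimitorbit} (proved via a circle inversion that makes the potential homogeneous of degree $-3$, forcing $\ddot r<0$ for the inverted problem and hence $\dot r>0$ for the original zero-energy orbit): if $\dot y$ stayed negative for all $h$, the limiting zero-energy orbit would satisfy $\dot y_0\le 0$, $\dot x_0>0$, be unbounded in $x$ but bounded in $y$, and thus exit the zero-energy Hill's region $\{y\ge |x|/\sqrt{63}\}$ --- a contradiction. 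Without some replacement for this global argument (or a rigorous direct proof of the small-$h$ sign, which is nontrivial and is exactly what the zero-energy analysis substitutes for), your intermediate value theorem scheme does not close, so as it stands the proposal has a genuine gap in addition to the reversed endpoint signs.
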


Let us mention that in~\cite{diacu-perez-chavela}, F.~Diacu and
E.~P\'erez-Chavela claim the existence of infinitely many periodic
orbits for a system consisting of $n$ electrons
situated at the vertices of a regular $n$-gon whose size changes
homothetically, and a nucleus moving along an orthogonal line through
the center of the $n$-gon.
For $n=2$, their problem is planar and for a suitable choice of parameters becomes
mathematically equivalent to the 
system~\eqref{eq:Ham}. Unfortunately, we were not able to follow their
arguments. In particular, their Theorem\,5 excludes the existence of
a Langmuir orbit (which is ``equally symmetric'' in their
terminology), contradicting our Theorem~\ref{langmuir}. 

Theorem~\ref{langmuir} will be proved in Section~\ref{mainpart}. Its proof
will require an understanding of the Langmuir problem at energy zero, 
which is the content of the next subsection.

\subsection{The Langmuir problem at energy zero}\label{zero}

In this subsection we consider the Langmuir problem at energy
$E=0$. By the rescaling argument in Section~\ref{scale}, it suffices
to consider the case $h=1$. For a curve $(x(t),y(t))$ in the plane we
denote by $(r(t),\phi(t))$ the corresponding curve in polar coordinates.
The following proposition will be used in STEP 4 of the proof of
Theorem~\ref{langmuir}. Note that it shows in particular that the
solution to the Langmuir problem at energy $E=0$ exists for all times $t\in[0,\infty)$. 

\begin{proposition}\label{PropLimitorbit}
The solution $(x(t),y(t))$ to the Langmuir problem at energy $E=0$ and
height $h=1$ satisfies $\dot r(t)>0$ for all $t\in (0,\infty)$. 
\end{proposition}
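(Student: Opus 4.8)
The plan is to read off everything from a single Lagrange--Jacobi (virial) identity for the squared radius $r^2=x^2+y^2$. By the rescaling of Section~\ref{scale} it suffices to treat $h=1$, so in Cartesian coordinates the initial data is $q(0)=(0,1)$, $\dot q(0)=(\sqrt{14},0)$, $p(0)=\tfrac12\dot q(0)$; note in particular that $\dot q(0)\neq 0$ while $q(0)\cdot\dot q(0)=0$. I would run the argument on the maximal interval of existence $[0,T)$ of the solution inside $\{y>0\}$, where the dynamics is~\eqref{eq:Ham}, and extract global existence as a byproduct.

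The core computation is the identity itself. Differentiating $r^2$ twice along a solution gives $\frac{d^2}{dt^2}(r^2)=2|\dot q|^2+2\,q\cdot\ddot q$; since $\ddot q=2\dot p=-2\nabla V$ and the Langmuir potential is homogeneous of degree $-1$, Euler's relation $q\cdot\nabla V=-V$ gives $q\cdot\ddot q=2V$, so $\frac{d^2}{dt^2}(r^2)=2|\dot q|^2+4V$. On the energy level $E=0$ we have $V=-\tfrac14|\dot q|^2$, and the whole thing collapses to
\[
   \frac{d^2}{dt^2}(r^2)=2|\dot q|^2+4V=|\dot q|^2\ \ge\ 0.
\]
Therefore $g(t):=\frac{d}{dt}(r^2)=2\,q\cdot\dot q$ is non-decreasing, and since $g(0)=0$ it is $\ge 0$ throughout; hence $r$ is non-decreasing and $r(t)\ge r(0)=1$.

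Global existence then comes almost for free. Energy conservation gives $V(q(t))=-\tfrac14|\dot q(t)|^2\le 0$, so the orbit remains in the Hill's region $\HH_0$, where $\tfrac{1}{2y}\le\tfrac{4}{r}$ forces $y\ge r/8\ge\tfrac18$; thus the orbit stays in $\{y>0\}$ and never approaches an electron collision, while $r\ge 1$ keeps it away from the nucleus, and $|\dot q|^2=-4V\le 16$ keeps the velocity bounded, ruling out escape to infinity in finite time. A standard continuation argument then yields $T=\infty$.

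Finally I would upgrade $\dot r\ge 0$ to strict positivity. If $g(t_1)=0$ for some $t_1>0$, then $g$, being non-decreasing and non-negative with $g(0)=g(t_1)=0$, vanishes identically on $[0,t_1]$, so $|\dot q|^2=g'\equiv 0$ there, contradicting $\dot q(0)\neq 0$. Hence $g>0$ on $(0,\infty)$, and since $g=2r\dot r$ with $r>0$ we get $\dot r>0$ on $(0,\infty)$. The only delicate point is precisely this last step: the virial identity by itself gives no more than monotone $r$, and a priori the orbit could coast out toward the unbounded boundary of $\HH_0$ with $\dot r\to 0$; it is the rigidity of a monotone function that returns to its initial value that excludes this.
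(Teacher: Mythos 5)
Your proof is correct, but it follows a genuinely different route from the paper. The paper's proof first conjugates by the circle inversion $q\mapsto 1/\overline{q}$ (with $p\mapsto -q^2\overline{p}$), divides the transformed Hamiltonian by $|q|^4$ to get a potential homogeneous of degree $-3$, and then computes in polar coordinates, using the zero-energy relation, that $\ddot r<0$ for the inverted problem; since $\dot r(0)=0$ this gives $\dot r<0$ there, which corresponds to $\dot r>0$ for the original problem (up to a time reparametrization that does not affect the sign). You instead apply the Lagrange--Jacobi (virial) identity directly to the original degree $-1$ homogeneous system: Euler's relation gives $\frac{d^2}{dt^2}(r^2)=2|\dot q|^2+4V$, which at $E=0$ collapses to $|\dot q|^2\ge 0$, so $g=2q\cdot\dot q$ is non-decreasing from $g(0)=0$; strictness then follows from the rigidity argument that $g\equiv 0$ on an interval would force $|\dot q(0)|^2=g'(0)=0$, contradicting $\dot q(0)=(\sqrt{14},0)$. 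Your computation checks out at every step (including $V=-\tfrac14|\dot q|^2$ on the zero-energy level, the bound $y\ge r/8\ge 1/8$ inside $\mathcal{H}_0$, and $|\dot q|^2\le 16$), and it buys two things the paper's argument leaves implicit: it avoids the inversion and the attendant reparametrization discussion entirely, and it makes the global existence claim (which the paper only states as a consequence of the proposition) an explicit part of the proof via a standard continuation argument in the region $\{r\ge 1,\ y\ge 1/8\}$ where the force field is bounded. The paper's inversion trick, on the other hand, yields the slightly stronger pointwise statement $\ddot r<0$ for the transformed radius, but for the purposes of STEP 4 only the sign of $\dot r$ matters, so nothing is lost in your version.
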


\begin{proof}
We first perform a circle inversion in configuration space $q\mapsto
\frac{1}{\overline{q}}$ to get equations that are easier to manipulate. In
order to get an equivalent dynamical problem we have to perform a
symplectic transformation on phase space, so we have to
transform the momentum variable by $p \mapsto -q^2\ol p$. In the
transformed variable the Langmuir Hamiltonian reads
$$
   H=|q|^4|p|^2-4|q|+\frac{|q|^2}{2\textrm{Im}(q)}.
$$
Dividing by $|q|^4$ yields the Hamiltonian
$$
   \wt{H}=|p|^2-\frac{4}{|q|^3}+\frac{1}{2|q|^2\textrm{Im}(q)}
$$
whose Langmuir problem at energy $0$ and height $1$ corresponds under
the inversion, up to time reparametrization, to the original Langmuir
problem at energy $0$ and height $1$. Thus showing $\dot r>0$ for the
original problem is equivalent to showing $\dot r<0$ for the Langmuir
problem with Hamiltonian $\wt H$.  

To proceed, we rewrite the Hamiltonian $\wt H$ in polar coordinates
$(r,\phi)$ as
$$
   \wt{H}(r,\phi,p_r,p_\phi)=p^2_r+\frac{p^2_\phi}{r^2}-\frac{4}{r^3}+\frac{1}{2r^3
     \sin{\phi}},
$$
where $p_\phi,p_r$ are the conjugate momentum variables. Note that the
potential
$$
   \tilde{V}(r,\phi) = -\frac{4}{r^3}+\frac{1}{2r^3 \sin(\phi)}
$$
is homogeneous with respect to the $r$-variable of degree $-3$.
In general, for a homogeneous potential of degree $\alpha$ we get from
the chain rule: $\dd{V}{r}=\frac{\alpha}{r}V(r,\phi)$. So
Hamilton's equations (those which are relevant to the present discussion) become 
\begin{align*}
   \dot{r} &= \dd{\tilde{H}}{p_r}=2p_r,\cr
   \dot{p_r} &=
   -\dd{\tilde{H}}{r}=-\dd{V}{r}-\dd{}{r}\left(p^2_r+\frac{p^2_\phi}{r^2}\right)
   = -\frac{\alpha}{r}V(r,\phi)+2\frac{p^2_\phi}{r^3}.
\end{align*}
But for energy $E=0$ we have $-\wt V(r,\phi)=p^2_r+\frac{p^2_\phi}{r^2}$, hence
$$
   \dot{p_r}=\frac{\alpha \cdot p^2_r}{r}+ \frac{\alpha \cdot
     p^2_\phi}{r^3} +2\frac{p^2_\phi }{r^3}
   = \frac{1}{r}\left(\alpha\cdot p^2_r+(\alpha+2)\cdot
   \frac{p^2_\phi}{r^2} \right).
$$ 
Combining this with the equation for $\dot{r}$, we get
$$
   \ddot{r}=\frac{2}{r}\cdot\left(\alpha\cdot p^2_r+(\alpha+2)\cdot
   \frac{p^2_\phi}{r^2}\right).
$$
In our case we have $\alpha=-3$, so the preceding equation shows
$\ddot{r}<0$. Since the Langmuir solution for $\wt H$ starts with
$\dot{r}(0)=0$, this implies $\dot{r}(t)<0$ for all $t>0$. Under the
circle inversion this corresponds to $\dot{r}(t)>0$ in the original
Langmuir problem, so the proposition is proved.
\end{proof}

\section{Existence of a Langmuir orbit}\label{mainpart}

\subsection{Outline of the proof}
In this section we prove Theorem~\ref{langmuir} in four Steps.
By the rescaling argument in Section~\ref{scale} we can and will restrict to the case
of energy $E=-1$ in Steps 1--3. 
Then the height $h$ in the Langmuir problem varies
in the interval $\left(0,\frac{7}{2}\right)$ and we denote the
corresponding solution by $(x_h,y_h)$. 
Since any solution is confined to the compact Hill's region for $E=-1$, any trajectory of the Langmuir problem for this energy has to come to rest in x-direction. We therefore place our argument on the time of first vanishing of the velocity-component in x-direction:
for each $h\in\left(0,\frac{7}{2}\right)$ we define
$$
   t_h := \inf\{t>0\mid \dot{x}_h(t)=0 \} \in (0,\infty).
$$
\smallskip
\textbf{STEP 1: The map $h\mapsto t_h$ is smooth.} 

Therefore, we can define a smooth map
$$
   \alpha \colon \left(0,\frac{7}{2}\right) \to \R,\qquad \ h \mapsto
   \dot{y}_h(t_h).
$$
Note that an $h$ with $\alpha(h)=0$ corresponds to a Langmuir orbit. 
Now Figure~\ref{fig:Langmuir-problem} suggests that $\alpha(h)$ should be
negative for $h$ close to $7/2$ and positive for $h$ close to $0$, so
in between it should have a zero (the desired Langmuir orbit).
\medskip

\textbf{STEP 2: There exists $\delta>0$ such that $\dot
y_h(t)<0$ for all $t\in(0,t_h]$ and $h\in[7/2-\delta,7/2)$.}

In particular, this implies $\alpha(h)<0$ for $h\in[7/2-\delta,7/2)$.
Rather than proving positivity of $\alpha$ for $h$ near $0$ directly,
we will argue by contradiction and {\bf assume that there exists no Langmuir
orbit.} Consequently, the continuity of $\alpha$ and STEP 2 imply $\alpha(h)<0$
for all $h\in\left(0,\frac{7}{2}\right)$, since $\alpha$ cannot have any zeros by that assumption.
\medskip

\textbf{STEP 3: The assumption implies $\dot{y}_h(t)<0$ for all $t\in(0,t_h]$ and all $h\in(0,\frac{7}{2})$.} \\

We again argue by contradiction: If the assertion was not true, then
there exists an $h_1\in(0,\frac{7}{2}-\delta)$ and a $t_1 \in (0,t_{h_1})$ such that
$\dot{y}_{h_1}(t_1)=0$. But then:

\begin{minipage}{5.8cm}
\psfrag{s}{$\tilde{h}=\frac{7}{2}-\delta$}
\psfrag{k}{$h_0$}
\psfrag{q}{$t_1$}
\psfrag{p}{$t_0$}
\psfrag{h}{$h_1$}
\psfrag{y}{$\dot{y}_h$}
\psfrag{t}{$t$}
 \includegraphics[width=5cm]{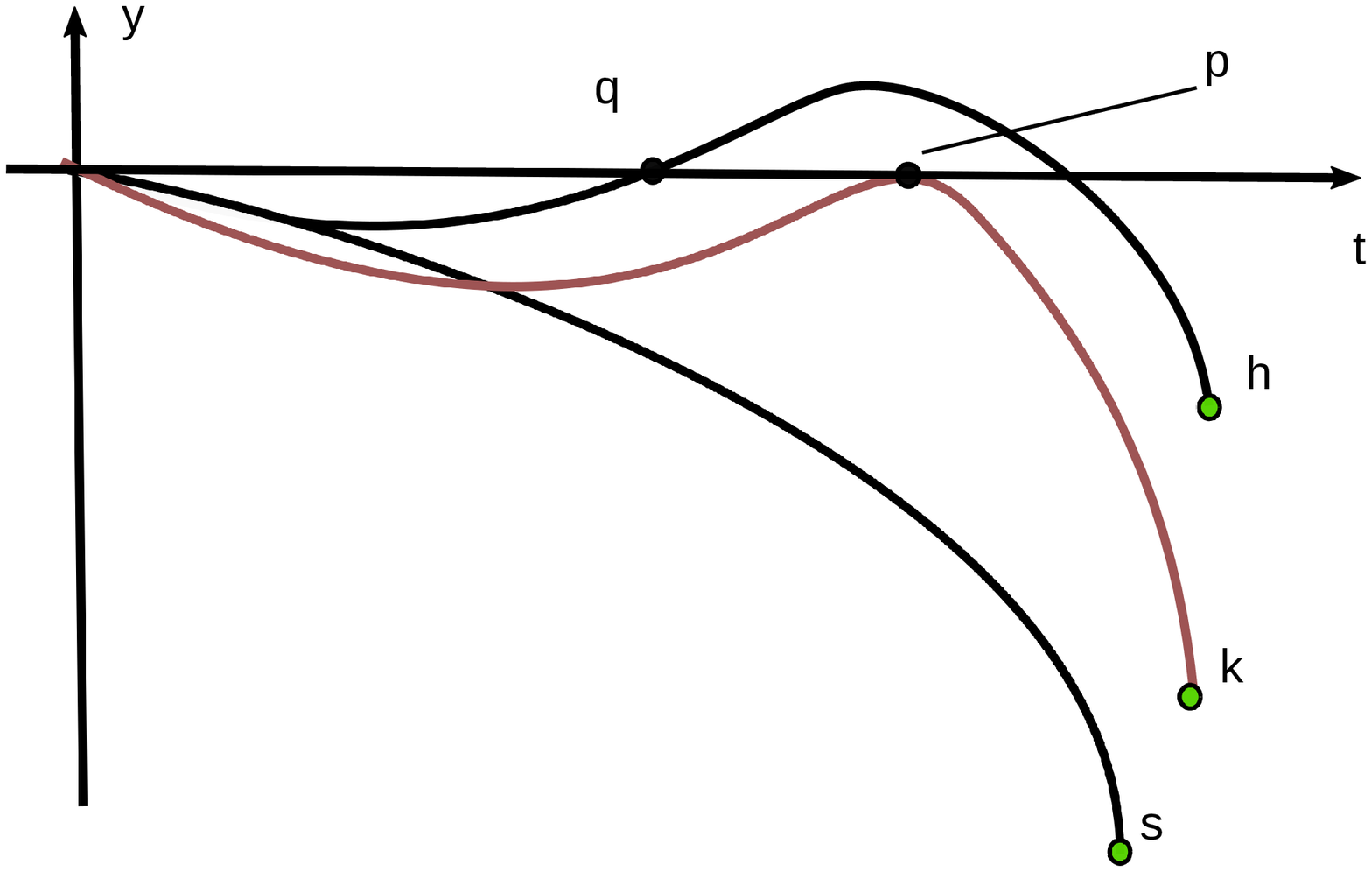}
\end{minipage}\begin{minipage}{6cm} Changing from a strictly negative
  derivative for $\tilde{h}=\frac{7}{2}-\delta$ to one with a zero at
  $t_1$ for $h_1$, we have to go through an $h_0$ with a $t_0$ at which
  $\dot y_{h_0}(t_0)=0$ and $\ddot{y}_{h_0}(t_0)=0$. Note that due to
  $\alpha(h)<0$ all ends are bound to be negative. 
\end{minipage} 

But points with vanishing force in the $y$-direction lie on the magical
line, thus $y_{h_0}(t)$ crosses the magical line horizontally at some
$t_0$ for the first time.  
But the force above the magical line is negative in the $y$-direction,
which with $\dot{y}_{h_0}(0)=0$ implies $\dot{y}_{h_0}(t_0)<0$, the desired contradiction. 
\medskip

\textbf{STEP 4: The conclusion of STEP 3 contradicts Proposition ~\ref{PropLimitorbit}.}\\ 
To prove this Step, we change our point of view: Instead of varying h,
we fix $h=1$ and vary the energy E (cf. Section \ref{scale}), so that
in the limit $E \to 0$ Proposition~\ref{PropLimitorbit} becomes applicable. 
The idea is roughly as follows: Since the orbit at zero energy (the
limit orbit in the following) has strictly increasing distance from
the origin, continuity forces the orbits closer and closer to this
limit orbit to stay longer and longer in the vincinity of the limit
orbit before falling back. This is not compatible with the conclusion
of STEP 3 that the velocity in $y$-direction is negative, and the
resulting contradiction finishes the proof. 


\subsection{STEP 1: The map $h \mapsto t_h$ is smooth.}
We begin by showing that the map $h\mapsto t_h$ is well-defined and smooth.
Hill's region at energy $E=-1$ is described by the inequality
$$
   1 + \frac{1}{2y} \leq \frac{4}{\sqrt{x^2+y^2}}.
$$
This first implies $1 + \frac{1}{2y} \leq \frac{4}{y}$, and therefore
$y\leq7/2$. Next, it implies $1 + \frac{1}{2y} \leq \frac{4}{|x|}$,
which together with the bound on $y$ yields $|x|\leq7/2$. Inserting
these estimates into the first equation in~\eqref{eq:Ham}, we obtain 
$$
   \ddot{x} = \frac{-8x}{(x^2+y^2)^{3/2}} \leq -8\gamma x
   \quad\text{with}\quad \gamma := \frac{1}{(\frac{49}{4}+\frac{49}{4})^{3/2}}.
$$
We interpret this as saying that the actual force in negative
x-direction ($\ddot{x}$) is at least as strong as $-8\gamma x$. Thus a
particle being shot in $x$-direction subject to the Langmuir
Hamiltonian will come to rest no later than a particle being decelerated
with force $-8\gamma x$. Hence solving $\ddot{x}=-8\gamma x$ gives an
upper bound for the time $t_h$. The solution to the latter ODE is
$x(t)=A\sin(\omega t)$ with $\omega=\sqrt{8\gamma}$. So
$\dot{x}(t)=A\omega \cos(\omega t)$ vanishes for
the first time if $\omega t=\frac{\pi}{2}$ and we obtain the estimate
$$
   t_h\leq t_\max := \frac{\pi}{2\sqrt{8\gamma}}.  
$$

Now let us denote by $t_L(h)$ the {\em lifetime} of the electron in the
Langmuir problem at energy $-1$ and height $h$. Assume first that $t_L(h)<t_\max$.
Since the orbit starts out with positive velocity in the $x$-direction, the $x$-velocity has to
vanish at some time before it falls into the origin, so we have
$t_h<t_L(h)<t_{max}$. If however $t_L(h)\geq t_\max$, then $t_h$ is
well-defined and finite by the inequality above. This proves that
$t_h\in(0,\infty)$ for all $h\in(0,7/2)$ and that the map $h\mapsto
t_h$ is well-defined. 
To show that it is smooth, we consider the open set
$$
   W := \bigcup_{h\in(0,\frac{7}{2})} \{h\}\times (0,t_L(h))\subset
   (0,\frac{7}{2})\times (0,\infty)
$$
and the smooth function
$$
   F\colon W\to\R,\qquad (h,t)\mapsto\dot x_h(t). 
$$
(Openness of $W$ is equivalent to lower semicontinuity of the map
$h\mapsto t_L(h)$, which in turn follows directly from continuous
dependence of solutions of the ODE on the initial conditions). 
Since $\frac{\p F}{\p t}(h,t)=\ddot x_h(t)<0$ by~\eqref{eq:Ham} for
all $(h,t)$, it follows from the implicit function theorem that
$F^{-1}(0)$ is the graph of a smooth function $h\mapsto t_h$. 
This proves that the map $h\mapsto t_h$, and therefore also the map
$\alpha:h\mapsto\dot y_h(t_h)$, is well-defined and smooth. 

\subsection{STEP 2: There exists $\delta>0$ such that $\dot
y_h(t)<0$ for all $t\in(0,t_h]$ and $h\in[7/2-\delta,7/2)$.}

To see this, note first that at time $t=0$ we have $x_h(0)=0$ and
$y_h(0)=h$, so the second equation in~\eqref{eq:Ham} yields $\ddot y_h(0) = -7/h^2<0$.
By continuity there exists $\tau>0$ such that for each $h\in[3,7/2]$
we have $\ddot y_h(t) <0$ for all $t\in[0,\tau]$, which in view of
$\dot y_h(0)=0$ implies $\dot y_h(t)<0$ for all $t\in(0,\tau]$.  

Now we argue by contradiction and assume there exist sequences
$h_n\nearrow 7/2$ and $t_n\in(0,t_{h_n}]$ with $\dot y_{h_n}(t_n)=0$. 
By the preceding estimate we must have $t_n\geq\tau$ for large $n$,
and by the upper bound from STEP 1 we have $t_n\leq t_\max$. So a
subsequence of $t_n$ converges to some $t_*\in[\tau,t_\max]$ such that
$\dot y_{7/2}(t_*)= 0$. By construction we have $t_*\leq t_L(7/2)$, the
lifetime of the solution for $h=7/2$. But for $h=7/2$ the solution of
the Langmuir problem falls straight into the origin along the $y$-axis,
hence $\dot y_{7/2}(t)<0$ for all $0<t< t_L(7/2)$, which yields the
desired contradiction provided that $t_*<t_L(7/2)$.

It remains to show that $t_*\neq t_L(7/2)$. 
Again, we assume the contrary and note that this would imply that
$(x_{h_n}(t_n),y_{h_n}(t_n))$ converges to $(0,0)$ because, as was
mentioned above, the trajectory for $h=7/2$ falls straight into the
singularity at $(0,0)$. This and the assumption that
$\dot{y}_{h_n}(t_n)=0$ together with the energy relation imply
that $|\dot{x}_{h_n}(t_n)|\to \infty$, as only
$\dot{x}_{h_n}(t_n)^2$ contributes to the kinetic energy whereas the
potential energy tends to $-\infty$.
On the other hand, $t_n \leq t_{h_n}$ and equation~\eqref{eq:Ham}
imply that $\ddot x(t)\leq 0$ for all $t\in[0,t_n]$, thus 
$$|\dot{x}_{h_n}(t_n)|\leq\dot{x}_{h_n}(0)=2\cdot\sqrt{\frac{7}{2h_n}-1}.$$ 
But as $n\to\infty$ the right hand side tends to $0$ because $h_n\to
7/2$, which contradicts $|\dot{x}_{h_n}(t_n)|\to \infty$ and finishes
the proof of STEP 2. 

\medskip

From now on we argue by contradiction and {\bf assume that there
  exists no Langmuir orbit}. By continuity of $\alpha$ and STEP 2 this
implies $\alpha(h)<0$ for all $h\in\left(0,\frac{7}{2}\right)$. 

\subsection{STEP 3: The assumption implies $\dot{y}_h(t)<0$ for
all $t\in(0,t_h]$ and all $h\in\left(0,\frac{7}{2}\right)$.} 

To see this, first note that by assumption we have
$\dot{y}_h(t_h)=\alpha(h)<0$ for all $h$, and by STEP 2 we have
$\dot{y}_h(t)<0$ for all $t\in(0,t_h]$ and $h\in[7/2-\delta,7/2)$.
Moreover any trajectory starts travelling above the magical line with $\dot{y}_h(0)=0$. As long as it stays above it, the particle was only accelatered downwards and thus has a negative velocity in y-direction. Thus for any $h$, there exists an
$\epsilon_h\in(0,t_h)$ with $\dot{y}_h(\epsilon_h)<0$, where we can choose $\epsilon_h$ to depend smoothly on $h$. 
    
Now we argue again by contradiction, assuming that there exists
$h_1\in(0,7/2-\delta)$ and $t_1 \in (0,t_{h_1})$ such that
$\dot{y}_{h_1}(t_1)=0$.
Consider the smooth map
$$
   F \colon (0,7/2-\delta]\times [0,1] \to \R,\qquad
    (h,s)\mapsto -\dot{y}_h\bigl((1-s)\cdot
     \epsilon_h +s\cdot t_h\bigr).
$$ 
By assumption it satisfies $F(7/2-\delta,s)>0$ for all
$s\in[0,1]$, $F(h,0)>0$ and $F(h,1)>0$ for all $h\in(0,7/2-\delta)$,
and $F(h_1,s_1)=0$ for some $(h_1,s_1)$. Set
$$
   h_0 := \sup\{h\in(0,7/2-\delta]\mid \text{there exists
     }s\in[0,1]\text{ with }F(h,s)=0\}.
$$
Then $h_0\in(0,7/2-\delta)$, and by continuity of $F$ there exists
$s_0\in(0,1)$ with $F(h_0,s_0)=0$. If $\frac{\p F}{\p s}(h_0,s_0)\neq
0$, then by the implicit function theorem, $F^{-1}(0)$ would near
$(h_0,s_0)$ be a graph $s=s(h)$, contradicting the maximality of $h_0$. 
Hence for each such $s_0$ we must have $\frac{\p F}{\p s}(h_0,s_0)=0$. 
This translates back to the existence of a point $(h_0,t_0)$ with
$\dot{y}_{h_0}(t_0)=0$ and $\ddot{y}_{h_0}(t_0)=0$. Moreover, we can
assume that $h_0$ is maximal with this property, and we can
choose $t_0$ to be minimal given $h_0$. 

But from Chapter~\ref{setup} we know that the only points at which
$\ddot y=0$ lie on the magical line $\sqrt{3}y=|x|$. Thus the point
$(x_{h_0}(t_0),y_{h_0}(t_0))$ must lie on the magical line, and it
must pass this line horizontally because $\dot{y}_{h_0}(t_0)=0$. But
this is impossible since we started on the $y$-axis with horizontal
velocity $\dot{y}_{h_0}(0)=0$ and the force field in the region
between the $y$-axis and the magical line satisfies $\ddot{y}<0$ (the
solution curve $(x_{h_0}(t),y_{h_0}(t))$ for times $t\in (0,t_0)$ must
be completely contained in that region due to the fact that
$\dot{x}(t)>0$ for $t<t_h$ and $t_0$ is the first time at which the
solution curve hits the magical line) so that after any finite
positive time the velocity vector has a negative $y$-component. This
contradiction proves STEP 3. 
\medskip

\subsection{STEP 4: The conclusion of STEP 3 contradicts Proposition ~\ref{PropLimitorbit}.}
Now we will consider the Langmuir problem in the limit $h\to 0$ to
obtain a contradiction. For this, we will change our point of
view. Rather than considering the Langmuir problems at fixed energy
$-1$ and heights $h\searrow 0$, we will consider the Langmuir problems at
{\em fixed height $1$ and energies $-h\nearrow 0$}. By the rescaling
argument in Section~\ref{scale}, these problems differ only in their
time parametrization. By a slight abuse of notation, in this step 
we denote by $q_h=(x_h,y_h)$ the solution to the Langmuir problem at
height $1$ and energy $-h\in(-7/2,0]$. Note that we include the case
$h=0$ which corresponds to the solution $q_0=(x_0,y_0)$ to the
Langmuir problem at height $1$ and energy $0$ considered in
Section~\ref{zero}. For $h\in(0,7/2)$ we denote by $\tau_h>0$ the
first time at which $\dot x_h(\tau_h)=0$ (which differs from the time
$t_h$ in the preceding steps). By STEP 3 we have $\dot{y}_h(t)<0$ for
all $t\in(0,\tau_h]$ and $h\in(0,7/2)$.  

We first claim that $\tau_h\to\infty$ as $h\to 0$. Otherwise there
would exist a sequence $h_n\to 0$ such that $\tau_{h_n}$ converges to
a finite limit $\tau_0\in[0,\infty)$. By continuity, this would imply
$\dot x_0(\tau_0)=0$ and $\dot y_0(\tau_0)\leq 0$. Since $\dot
x_0(0)>0$, we must have $\tau_0>0$. But then in polar coordinates at
time $\tau_0$ we would have $r_0\dot r_0 = x_0\dot x_0+y_0\dot y_0\leq
0$, contradicting Proposition~\ref{PropLimitorbit}. This proves the claim.

From the claim and $\dot{y}_h(t)<0$ for all $t\in(0,\tau_h]$ and
$h\in(0,7/2)$ we deduce $\dot y_0(t)\leq 0$ for all $t\geq 0$. In view
of the initial condition $y_0(0)=1$, this yields $y_0(t)\leq 1$ for
all $t\geq 0$. Moreover, combined with Proposition~\ref{PropLimitorbit} 
it implies $\dot x_0(t)>0$ for all $t>0$. 

If $x_0$ were bounded, then because $y_0$ is also bounded the argument
in STEP 1 would show that $\dot x_0$ must vanish at some positive
time, which it does not. Thus $x_0$ is unbounded. Since $y_0(t)\leq 1$
for all $t$, this implies that the orbit $q_0=(x_0,y_0)$ leaves the
energy zero Hill's region $\{y\geq \frac{1}{\sqrt{63}}|x|\}$ after
some finite time, which is impossible. 
This final contradiction shows that the original assumption was false
and there exists a Langmuir orbit, which proves Theorem~\ref{langmuir}.

\section{Numeric visualisation and outlook}
After the existence of Langmuir's periodic orbit has been established
in the previous sections, we want to give some numerical evidence of
where to find it and where one might look for further periodic
orbits. The figures below show some trajectories of the above
described \emph{Langmuir problem} at energy $E=-1$. The starting
height $h$ is indicated below each picture. We see that for $h=1.398$
we recover Langmuir's periodic orbit.\\ 
\begin{figure}[h!]
 \centering
 \includegraphics[width=8cm,bb=14 14 447 306]{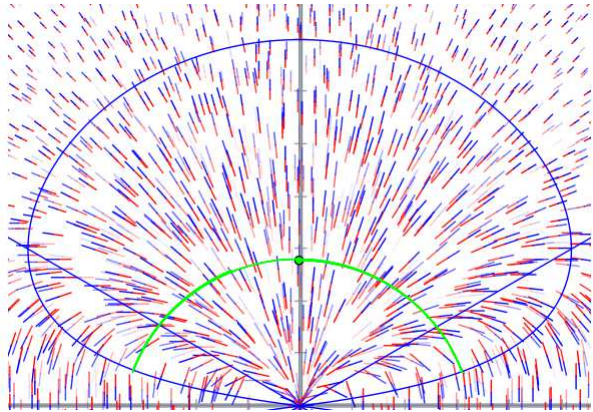}
 \caption{The Langmuir orbit $h=1.398$}
\end{figure}

We provide numerical evidence that at least one other periodic orbit
exists in the Langmuir problem. To this end, compare the numerical
sketches for $h=0.3$ and $h=0.8$ shown below. Whereas for $h=0.3$
after bouncing forth and back the end (displayed in red) is reflected
below the part of the trajectory before the last reflection, the end
(also displayed in red) for $h=0.8$ is reflected above that last part
of the trajectory after bouncing forth and back. Continuously varying
the height between $h=0.3$ and $h=0.8$ will eventually give a
trajectory which touches the zero-velocity curve and falls back on the
trajectory it came in. This gives rise to another periodic orbit.   

\begin{minipage}{7cm}

\includegraphics[width=7cm,bb=14 14 312 157]{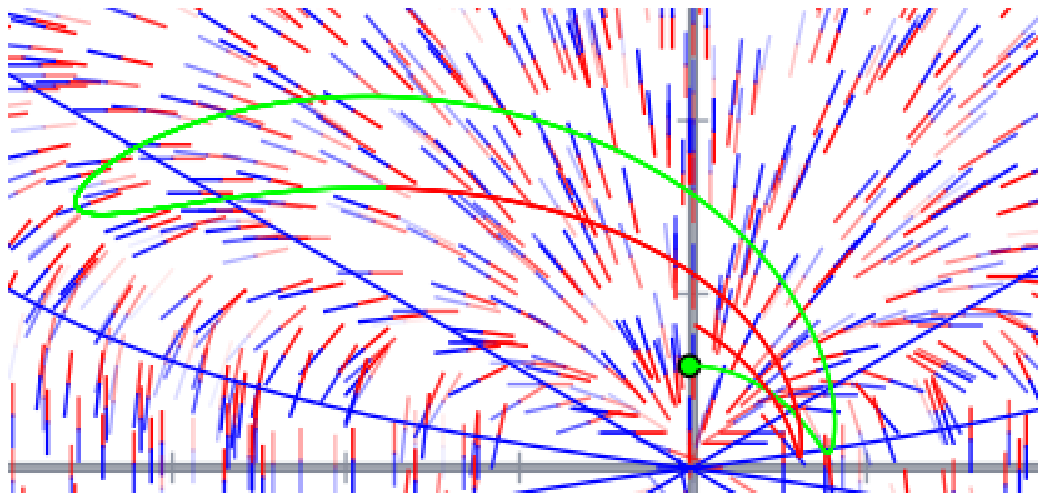}
 \vspace{0.1cm}
 \\
 $h=0.3$
\end{minipage}
\begin{minipage}{7cm}

 \includegraphics[width=7cm,bb=14 14 147 94]{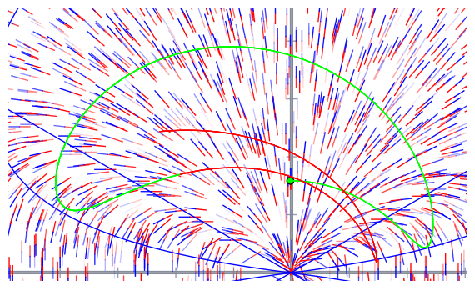}
 \vspace{0.1cm}
 
 $h=0.8$

\end{minipage}

The figures were created with \emph{Cinderella 2.8} using a third order Runge-Kutta method.
The green spot indicates where we started on the y-axis.

\end{document}